\documentclass[11pt]{article}
\usepackage[utf8]{inputenc}
\usepackage[T1]{fontenc}
\usepackage{amsmath,amssymb,amsthm}
\usepackage{mathrsfs}
\usepackage{hyperref}
\usepackage[margin=1in]{geometry}

\newtheorem{theorem}{Theorem}[section]
\newtheorem{lemma}[theorem]{Lemma}
\newtheorem{proposition}[theorem]{Proposition}
\newtheorem{corollary}[theorem]{Corollary}
\theoremstyle{definition}

\theoremstyle{remark}

\title{A Hardy-space operator acting on $\zeta$-zeros}
\author{Juan Manzur and Waleed Noor}
\date{}

\begin{document}

\maketitle

\begin{abstract}
A Hardy space approach to the Nyman-Beurling and Báez-Duarte criterion for the Riemann Hypothesis (RH) was introduced in \cite{Noor19}. It states that the RH holds if and only if the span of a particular sequence of functions $(h_k)_{k\geq2}$, denoted $\mathcal{N}$, is dense in the Hardy space $H^2$. In this note we construct a bounded operator $T$ on \(H^{2}\) such that if the closure of $\mathcal{N}$ is $T$-invariant, then either the RH holds or there exists an infinite progression of $\zeta$-zeros. In particular, this yields an operator-theoretic criterion for the Riemann hypothesis as an invariant subspace question.
\end{abstract}

\section{Introduction}
The Hardy space \(H^{2}\) of analytic functions on the open unit disk is identified with \(\ell^{2}(\mathbb{N}_{0})\) by
\[
f(z) = \sum_{j = 0}^{\infty}a_{j}z^{j}\longleftrightarrow (a_{j})_{j\geq 0},\qquad \| f\|_{H^{2}}^{2} = \sum_{j = 0}^{\infty}|a_{j}|^{2}.
\]
The $H^2$-functions defined by
\[
h_{k}(z) = \frac{1}{1 - z}\log \left(\frac{1 + z + \cdots + z^{k - 1}}{k}\right),\qquad k\geq 2
\]
and their linear span denoted \(\mathcal{N}\) are associated with an $H^2$-reformulation of the Riemann hypothesis (RH) as follows (see Noor \cite{Noor19}):
\begin{equation}
The \ RH \ holds \ \iff \mathcal{N} \ is \ dense \ in \ H^{2}.\tag{A}
\end{equation}
For \(s \in \mathbb{C}\) with \(\Re s > \frac{1}{2}\), let \(k_{s} \) denote the Riesz representative of the bounded functional \(\Lambda^{(s)}\) on $H^2$ introduced in \cite[p. 6]{GKNS24}. Explicitly, they have Taylor series representations
\begin{equation}
k_{s}(z) = -\frac{1}{\bar{s}} -\frac{1}{\bar{s}}\sum_{j = 1}^{\infty}\big((j + 1)^{1 - \bar{s}} - j^{1 - \bar{s}}\big)z^{j}. \tag{B}
\end{equation}
Since
$
(j + 1)^{1 - \bar{s}} - j^{1 - \bar{s}} = O_{s}(j^{- \Re s}),
$
 the coefficient sequence in (B) belongs to \(\ell^{2}\) whenever \(\Re s > \frac{1}{2}\). Thus \(k_{s} \in H^{2}\) throughout the half-plane \(\Re s > \frac{1}{2}\). These functionals \(\Lambda^{(s)}\) satisfy
\begin{equation}
\Lambda^{(s)}(h_k)=\langle h_k,k_s\rangle = -\frac{\zeta(s)}{s}\big(k^{1 - s} - 1\big),\qquad \Re s > \frac{1}{2}. \tag{C}
\end{equation}
This corresponds to the formula in \cite[p. 6]{GKNS24} after removing the factor \(1 / k\) used there in the definition of \(h_{k}\). See \cite[Appendix]{CMSN24} for a direct proof of (C). It follows that for any \(\Re s > \frac{1}{2}\)
\begin{equation}
k_{s}\in \mathcal{N}^{\perp}\iff \zeta (s) = 0. \tag{D}
\end{equation}
The motivation for this note is the following $\mathbf{question}$: \emph{Does there exist a bounded operator T on $H^2$ whose adjoint $T^*$ maps the kernel $k_s$ to a scalar multiple of a translated kernel $k_{s+\alpha}$ with $\Re \alpha > 0$?}\\ \\
If such a $T$ exists with the additional property that $\overline{\mathcal{N}}$ is a $T$-invariant subspace (so $T^*\mathcal{N}^{\perp}\subset \mathcal{N}^{\perp}$), it would follow by (D) that $k_{\rho+n\alpha}\in \mathcal{N}^{\perp}$ for all $n\geq 1$ whenever $\zeta (\rho) = 0$ for some $\rho$ with $\Re \rho>1/2$. This implies that $\zeta$ has an infinite progression of zeros $\rho, \rho+\alpha, \rho+2\alpha, \ldots$, which is impossible by classical $\zeta$-zero density results and the fact that $\zeta(s)\neq 0$ for $\Re s>1$. Such a contradiction would imply the RH. In this note we show that there exist operators that answer the question above.

\section{Definition of the operator}

Write \(e_{j}(z) = z^{j}\) and fix \(\tau \in \mathbb{R}\). Define the diagonal operator \(D_{\tau}\) on $H^2$ by
\begin{equation}
D_{\tau}e_{j} = (j + 1)^{i\tau}e_{j},\qquad j\geq 0. \tag{1}
\end{equation}
Since \(|(j + 1)^{i\tau}| = 1\), \(D_{\tau}\) is unitary and
\begin{equation}
\| D_{\tau}\|_{H^2} = 1,\qquad D_{\tau}^{*} = D_{-\tau}. \tag{2}
\end{equation}
We also use the \emph{Copson operator} \(\mathcal{C}\) (see \cite{Copson}) defined on $\ell^2$ (and naturally on $H^2$) by
\begin{equation}
(\mathcal{C}a)_{k} = \sum_{j > k}\frac{a_{j}}{j},\qquad k\geq 0. \tag{3}
\end{equation}
for a sequence $a=(a_n)_{n\geq 0}\in\ell^2$.
Equivalently,
\[
\mathcal{C}e_{0} = 0,\qquad \mathcal{C}e_{j} = \frac{1}{j}\sum_{k = 0}^{j - 1}e_{k}\quad (j\geq 1).
\]
Its adjoint on \(\ell^2\) is the discrete \emph{Cesàro/Hardy} operator
\begin{equation}
(\mathcal{C}^{*}b)_{0} = 0,\qquad (\mathcal{C}^{*}b)_{j} = \frac{1}{j}\sum_{k = 0}^{j - 1}b_{k}\quad (j\geq 1). \tag{4}
\end{equation}
By the classical discrete Hardy inequality: If \(1< p< \infty\) and \(x = (x_{n})_{n\geq 1}\in \ell^{p}\), then
\begin{equation}
\sum_{n = 1}^{\infty}\left|\frac{1}{n}\sum_{k = 1}^{n}x_{k}\right|^{p}\leq \left(\frac{p}{p - 1}\right)^{p}\sum_{n = 1}^{\infty}|x_{n}|^{p}, \tag{5}
\end{equation}
(see Hardy, Littlewood and Pólya \cite[§9.8, Theorems 326-327]{HLP52}) it follows that
\begin{equation}
\| \mathcal{C}\|_{H^2} = \| \mathcal{C}^{*}\|_{H^2}\leq 2. \tag{6}
\end{equation}
Finally, define a non-unitary diagonal operator \(Q_{\tau}\) by
\[
Q_{\tau}e_{0} = 0,\qquad Q_{\tau}e_{j} = j\big((j + 1)^{i\tau} - j^{i\tau}\big)e_{j},\qquad j\geq 1.
\]
Our desired operator is
\begin{equation}
T_{\tau} = D_{\tau} + \mathcal{C}Q_{\tau} \tag{7}
\end{equation}
This operator is the main focus of the note.

\section{The operator $T_\tau$}

For the boundedness of $T_\tau$, we need the following elementary estimate.

\begin{lemma}\label{lem:2.1}
For every \(j\geq 1\) and \(\tau \in \mathbb{R}\)
\begin{equation}
\left|(j + 1)^{i\tau} - j^{i\tau}\right|\leq |\tau |\log \left(1 + \frac{1}{j}\right)\leq \frac{|\tau|}{j}. \tag{8}
\end{equation}
Consequently,
\begin{equation}
\| Q_{\tau}\| \leq |\tau |. \tag{9}
\end{equation}
\end{lemma}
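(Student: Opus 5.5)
The plan is to write the difference of the two unimodular numbers as an integral and bound the integrand pointwise. For $j\geq 1$ and $\tau\in\mathbb{R}$ we have $(j+1)^{i\tau}=e^{i\tau\log(j+1)}$ and $j^{i\tau}=e^{i\tau\log j}$, so
\[
(j+1)^{i\tau}-j^{i\tau}=\int_{\log j}^{\log(j+1)} i\tau\, e^{i\tau t}\,dt .
\]
Since $|i\tau e^{i\tau t}|=|\tau|$ for real $t$, the modulus of the integral is at most $|\tau|\big(\log(j+1)-\log j\big)=|\tau|\log(1+1/j)$. This is the first inequality in (8). Equivalently, one can use $|e^{ia}-e^{ib}|\leq|a-b|$ for real $a,b$, which is the same estimate, since the chord is no longer than the arc.

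For the second inequality in (8) I would invoke the elementary bound $\log(1+x)\leq x$ for $x> -1$. Applied with $x=1/j>0$, it gives $\log(1+1/j)\leq 1/j$. This follows from concavity of $\log$, or from $\log(1+x)=\int_0^x\frac{dt}{1+t}\leq x$ for $x\geq0$.

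For (9), note that $Q_\tau$ is diagonal in the orthonormal basis $(e_j)_{j\geq0}$ with eigenvalues $\lambda_0=0$ and $\lambda_j=j\big((j+1)^{i\tau}-j^{i\tau}\big)$ for $j\geq1$. A diagonal operator has norm $\sup_j|\lambda_j|$, because $\|Q_\tau f\|^2=\sum_j|\lambda_j|^2|a_j|^2\leq(\sup_j|\lambda_j|)^2\|f\|^2$. By (8), $|\lambda_j|\leq j\cdot|\tau|/j=|\tau|$ for every $j\geq1$, and $\lambda_0=0$. Hence $Q_\tau$ is bounded on $H^2$ with $\|Q_\tau\|\leq|\tau|$.

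No step is a genuine obstacle. The only point needing care is that we use the principal real logarithm of the positive integers $j$ and $j+1$, so the integral representation is legitimate. Also, the multiplication by $j$ in the definition of $Q_\tau$ is exactly cancelled by the $1/j$ decay in (8); this cancellation is why $Q_\tau$ is bounded at all.
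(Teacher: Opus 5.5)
Your proof is correct and follows essentially the same route as the paper. The paper factors out $j^{i\tau}$ and uses $|e^{ix}-1|\le|x|$, which is the chord-versus-arc bound your integral encodes; it then applies $\log(1+x)\le x$ and bounds the diagonal entries of $Q_\tau$ by $|\tau|$, and your explicit diagonal-operator norm estimate simply spells out the paper's ``multiplying by $j$ gives (9).''
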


\begin{proof}
Factoring out \(j^{i\tau}\) whose modulus is one gives
\[
(j + 1)^{i\tau} - j^{i\tau} = j^{i\tau}\left[\exp \left(i\tau \log \left(1 + \frac{1}{j}\right)\right) - 1\right].
\]
Since \(|e^{ix} - 1|\leq |x|\) for real \(x\), the first inequality follows. The second inequality is the elementary bound \(\log (1 + x)\leq x\) for \(x\geq 0\). Multiplying by \(j\) gives (9).
\end{proof}

\begin{proposition}\label{thm:2.2}
For every \(\tau \in \mathbb{R}\), the operator \(T_{\tau}\) defined by (7) is bounded on \(H^{2}\) and
\begin{equation}
\| T_{\tau}\| \leq 1 + 2|\tau |. \tag{10}
\end{equation}
\end{proposition}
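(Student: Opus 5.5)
The plan is to bound each of the two summands in (7) separately and then apply the triangle inequality together with submultiplicativity of the operator norm. Since $T_\tau = D_\tau + \mathcal{C}Q_\tau$ is a sum of a bounded operator and a composition of two operators, it suffices to show that $D_\tau$, $\mathcal{C}$ and $Q_\tau$ are each bounded on $H^2$. Once that is known,
\[
\|T_\tau\| \le \|D_\tau\| + \|\mathcal{C}\|\,\|Q_\tau\|.
\]

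The three ingredients are already available. By (2), $D_\tau$ is unitary, so $\|D_\tau\|=1$. By (6), which follows from the discrete Hardy inequality (5) with $p=2$, we have $\|\mathcal{C}\|\le 2$. By (9) in Lemma~\ref{lem:2.1}, $\|Q_\tau\|\le|\tau|$. For the last bound, recall that $Q_\tau$ is diagonal with entries $j\big((j+1)^{i\tau}-j^{i\tau}\big)$. Estimate (8) shows that these entries have modulus at most $|\tau|$ uniformly in $j$, and a diagonal operator has norm equal to the supremum of the moduli of its diagonal entries. Substituting the three bounds gives
\[
\|T_\tau\|\le 1+2|\tau|,
\]
which is (10).

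The only point that needs any care is that $\mathcal{C}$ is genuinely bounded on all of $H^2$, that is, that (6) holds for the indexing used in (3)--(4). The adjoint $\mathcal{C}^*$ in (4) sets $(\mathcal{C}^*b)_j=\frac1j\sum_{k=0}^{j-1}b_k$. Putting $x_n=b_{n-1}$ rewrites this as $(\mathcal{C}^*b)_j=\frac1j\sum_{n=1}^{j}x_n$, which is exactly the Hardy averaging operator in (5). Hence $\|\mathcal{C}^*b\|^2\le 4\|b\|^2$. Since an operator and its adjoint have the same norm, $\|\mathcal{C}\|=\|\mathcal{C}^*\|\le 2$. The identification of $\mathcal{C}$ as the adjoint of (4) is a direct check on the basis vectors $e_j$.
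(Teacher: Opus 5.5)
Your proposal is correct and is the same argument as the paper: the triangle inequality and submultiplicativity, with $\|D_\tau\|=1$, $\|\mathcal{C}\|\le 2$ from (6), and $\|Q_\tau\|\le|\tau|$ from (9). Your check of the index shift $x_n=b_{n-1}$, which matches (4) to the Hardy inequality (5), is a correct detail that the paper leaves implicit.
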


\begin{proof}
Since \(D_{\tau}\) is unitary, by (6) and Lemma \ref{lem:2.1} we get
\[
\| \mathcal{C}Q_{\tau}\| \leq \| \mathcal{C}\| \| Q_{\tau}\| \leq 2|\tau |
\]
and therefore
$
\| T_{\tau}\| \leq \| D_{\tau}\| +\| \mathcal{C}Q_{\tau}\| \leq 1 + 2|\tau |.
$
\end{proof}
If \(f(z) = \sum_{j\geq 0}a_{j}z^{j}\in H^2\), then the $k$-th coefficient of $T_\tau f$ is
\begin{equation}
(T_{\tau}f)_{k} = (k + 1)^{i\tau}a_{k} + \sum_{j > k}((j + 1)^{i\tau} - j^{i\tau})a_{j},\qquad k\geq 0. \tag{11}
\end{equation}
Let \(S^{*}\) be the backward shift on \(H^{2}\) and set
\begin{equation}
\Delta := I - S^{*}. \tag{12}
\end{equation}
Thus, if \(f(z) = \sum_{j \geq 0} a_j z^j\), then
\[
(\Delta f)_k = a_k - a_{k + 1}.
\]
Notice that \(\Delta\) is injective on \(H^{2}\) since if \(a_k = a_{k + 1}\) for every \(k\), then \((a_k)\) is constant, and the only constant sequence in \(\ell^2\) is the zero sequence. From (11) one easily sees that
\[
(T_{\tau}f)_{k} - (T_{\tau}f)_{k + 1} = (k + 1)^{i\tau}(a_{k} - a_{k + 1}),
\]
which is precisely the relation
\begin{equation}
\Delta T_{\tau}= D_{\tau}\Delta. \tag{13}
\end{equation}
Hence $T_\tau$ is quasi-conjugate to a diagonal operator and these operators form a group.

\begin{corollary}\label{cor:3.2}
The family \((T_{\tau})_{\tau \in \mathbb{R}}\) is a one-parameter group of bounded invertible operators:
\begin{equation}
T_{\tau}T_{\sigma} = T_{\tau +\sigma},\qquad T_{0} = I,\qquad T_{\tau}^{-1} = T_{-\tau}. \tag{14}
\end{equation}
\end{corollary}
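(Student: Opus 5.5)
The plan is to transfer the group law from the diagonal operators to the $T_\tau$ through the intertwining relation (13) and the injectivity of $\Delta$. First, the diagonal operators themselves form a group. Since $D_\tau e_j=(j+1)^{i\tau}e_j$, we have
\[
D_\tau D_\sigma e_j=(j+1)^{i(\tau+\sigma)}e_j=D_{\tau+\sigma}e_j,
\]
and clearly $D_0=I$.

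Next, apply (13) twice to get
\[
\Delta T_\tau T_\sigma = D_\tau \Delta T_\sigma = D_\tau D_\sigma \Delta = D_{\tau+\sigma}\Delta = \Delta T_{\tau+\sigma}.
\]
Hence $\Delta\bigl(T_\tau T_\sigma - T_{\tau+\sigma}\bigr)f=0$ for every $f\in H^2$. All operators involved are bounded by Proposition \ref{thm:2.2}, so $(T_\tau T_\sigma - T_{\tau+\sigma})f\in H^2$. Injectivity of $\Delta$, noted after (12), then gives $T_\tau T_\sigma=T_{\tau+\sigma}$.

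For $T_0=I$ there are two routes. Directly, $Q_0=0$ because $(j+1)^0-j^0=0$, and $D_0=I$, so $T_0=D_0+\mathcal{C}Q_0=I$. Alternatively, $\Delta T_0=\Delta$ together with injectivity of $\Delta$ gives the same conclusion.

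Invertibility then follows formally: $T_\tau T_{-\tau}=T_0=I=T_{-\tau}T_\tau$. So $T_\tau$ is a bijective bounded operator with bounded inverse $T_{-\tau}$.

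The only step requiring care is the injectivity argument. It needs $\Delta$ to be injective on all of $H^2$, which the paper already justified via the $\ell^2$ argument, and it needs (13) to hold as an identity of operators on $H^2$, not merely coefficientwise on polynomials. The latter is fine: (11) holds for every $f\in H^2$ because the tail sums converge absolutely, since $|(j+1)^{i\tau}-j^{i\tau}|\le|\tau|/j$ by Lemma \ref{lem:2.1} and $(a_j)\in\ell^2$. Beyond this, no real obstacle is expected.
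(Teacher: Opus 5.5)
Your proposal is correct and follows the paper's proof essentially verbatim: you apply (13) twice to get $\Delta T_\tau T_\sigma = \Delta T_{\tau+\sigma}$, use injectivity of $\Delta$ for the group law, and obtain invertibility from $T_\tau T_{-\tau}=T_0=I=T_{-\tau}T_\tau$. Your direct check $T_0 = D_0 + \mathcal{C}Q_0 = I$ (since $Q_0=0$) justifies $T_0=I$ slightly more cleanly than the paper's ``take $\sigma=0$'', which by itself only yields $T_\tau T_0 = T_\tau$.
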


\begin{proof}
Using (13) twice we get
\[
\Delta T_{\tau}T_{\sigma} = D_{\tau}\Delta T_{\sigma} = D_{\tau}D_{\sigma}\Delta = D_{\tau +\sigma}\Delta = \Delta T_{\tau +\sigma}.
\]
Since \(\Delta\) is injective we get \(T_{\tau}T_{\sigma} = T_{\tau +\sigma}\). The rest follow by taking \(\sigma = 0\) and \(\sigma = -\tau\).
\end{proof}

Notice that taking adjoints in (13) gives
\begin{equation}
T_{\tau}^{*}\Delta^{*} = \Delta^{*}D_{-\tau},\qquad \Delta^{*} = I - S \tag{15}
\end{equation}
where \(S\) is the unilateral shift on $H^2$. Since all operators in (7) are bounded, we have
\begin{equation}
T_{\tau}^{*} = D_{-\tau} + Q_{\tau}^{*}C^{*} \tag{16}
\end{equation}
and denoting
$
B_{j}= \sum_{k = 0}^{j}b_{k}$ for $g(z) = \sum_{j = 0}^{\infty}b_{j}e_{j}\in H^2$ allows us to describe the adjoint of $T_\tau$.

\begin{proposition}\label{prop:4.1}
For \(g=\sum_{j = 0}^{\infty}b_{j}e_{j} \in H^2\),
\begin{equation}
(T_{\tau}^{*}g)_{0} = b_{0}, \tag{17}
\end{equation}
and, for every \(j \geq 1\),
\begin{equation}
(T_{\tau}^{*}g)_{j} = (j + 1)^{-i\tau}B_{j} - j^{-i\tau}B_{j - 1}. \tag{18}
\end{equation}
\end{proposition}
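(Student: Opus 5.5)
We compute $T_\tau^*g$ directly from the formula (16), $T_\tau^* = D_{-\tau} + Q_\tau^*\mathcal{C}^*$, using the explicit descriptions of each factor given earlier. First, by (2) and (1), $D_{-\tau}$ acts diagonally: $(D_{-\tau}g)_j = (j+1)^{-i\tau}b_j$ for all $j\ge 0$. Next, by (4), $(\mathcal{C}^*g)_0 = 0$ and $(\mathcal{C}^*g)_j = \frac{1}{j}\sum_{k=0}^{j-1}b_k = \frac{1}{j}B_{j-1}$ for $j\ge1$. Since $Q_\tau$ is diagonal with $Q_\tau e_0=0$ and entries $j((j+1)^{i\tau}-j^{i\tau})$, its adjoint $Q_\tau^*$ is diagonal with the complex conjugate entries. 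Because $\tau$ is real, these are $j((j+1)^{-i\tau}-j^{-i\tau})$, and the $0$-th entry is still $0$.

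For $j=0$, both $(Q_\tau^*\mathcal{C}^*g)_0=0$ (the $0$-th diagonal entry of $Q_\tau^*$ vanishes, and also $(\mathcal{C}^*g)_0=0$) and $(D_{-\tau}g)_0 = 1^{-i\tau}b_0=b_0$. This gives (17). For $j\ge1$, the factor $j$ from $Q_\tau^*$ cancels the $1/j$ from $\mathcal{C}^*$:
\[
(T_\tau^*g)_j = (j+1)^{-i\tau}b_j + \big((j+1)^{-i\tau}-j^{-i\tau}\big)B_{j-1}.
\]
Using $b_j + B_{j-1} = B_j$, this regroups to $(j+1)^{-i\tau}B_j - j^{-i\tau}B_{j-1}$, which is (18).

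The only point needing care is justifying (16), i.e.\ that $(\mathcal{C}Q_\tau)^* = Q_\tau^*\mathcal{C}^*$ with $\mathcal{C}^*$ given by (4) on all of $H^2$ (not just on finitely supported sequences). This follows because $\mathcal{C}$ and $Q_\tau$ are bounded, by (6) and Lemma \ref{lem:2.1}, and the identity $\langle \mathcal{C}e_j, e_k\rangle = \langle e_j, \mathcal{C}^*e_k\rangle$ holds on basis vectors: both sides equal $1/j$ when $k<j$ and $0$ otherwise. The identity then extends to all of $H^2$ by boundedness and density. Evaluating coefficientwise is legitimate since coordinate functionals are continuous. As a consistency check, one can verify that the result agrees with (15), since $(I-S)$ applied to the partial-sum sequence $(B_j)$ recovers $(b_j)$.
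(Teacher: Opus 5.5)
Your proof is correct and follows the paper's argument: you expand $T_\tau^* = D_{-\tau} + Q_\tau^*\mathcal{C}^*$ coordinatewise, let the factor $j$ from $Q_\tau^*$ cancel the $1/j$ from $\mathcal{C}^*$, and regroup using $b_j = B_j - B_{j-1}$. Your extra justification that (16) and (4) hold on all of $H^2$, via boundedness and checking on basis vectors, fills in a step the paper leaves implicit.
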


\begin{proof}
From (4) and the definition of \(Q_{\tau}\), we have
\[
(T_{\tau}^{*}g)_{j} = (j + 1)^{-i\tau}b_{j} + ((j + 1)^{-i\tau} - j^{-i\tau})B_{j - 1},\qquad j\geq 1.
\]
Since \(b_{j} = B_{j} - B_{j - 1}\), this simplifies immediately to (18). The zeroth coordinate is (17) because \(D_{- \tau}e_{0} = e_{0}\) and \(Q_{\tau}^{*}e_{0} = 0\).
\end{proof}

\section{Vertical translations of kernels $k_s$}
 If \(b_{j}\) denotes the \(j\)th Taylor coefficient of \(k_{s}\), then (B) telescopes to
\begin{equation}
B_{j}:=\sum_{k = 0}^{j}b_{k} = -\frac{(j + 1)^{1 - \bar{s}}}{\bar{s}},\qquad j\geq 0. \tag{19}
\end{equation}
We arrive at the main property of the operators $T_\tau$ which answers the $\mathbf{question}$ inspiring this note.

\begin{proposition}\label{thm:5.1}
Let \(\tau \in \mathbb{R}\) and \(\Re s > \frac{1}{2}\). Then
\begin{equation}
T_{\tau}^{*}k_{s} = \frac{\bar{s} + i\tau}{\bar{s}} k_{s - i\tau}. \tag{20}
\end{equation}
\end{proposition}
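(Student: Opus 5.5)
Proposition \ref{prop:4.1} combined with the partial sums (19) reduces the claim to a coefficient-by-coefficient check. Put $g = k_s$, so $b_j$ are the Taylor coefficients in (B) and $B_j = -(j+1)^{1-\bar s}/\bar s$. Since $s - i\tau$ satisfies $\Re(s-i\tau) = \Re s > \tfrac12$, the kernel $k_{s-i\tau}$ lies in $H^2$. Moreover $\overline{s - i\tau} = \bar s + i\tau$, so by (B) and (19) its coefficients are
\[
b'_0 = -\frac{1}{\bar s + i\tau},\qquad b'_j = -\frac{1}{\bar s + i\tau}\Big((j+1)^{1-\bar s - i\tau} - j^{1-\bar s-i\tau}\Big),\quad j\ge 1.
\]
After multiplying by $(\bar s + i\tau)/\bar s$, the right-hand side of (20) has $j$-th coefficient $-\tfrac{1}{\bar s}\big((j+1)^{1-\bar s - i\tau} - j^{1-\bar s - i\tau}\big)$ for $j\geq1$, and zeroth coefficient $-1/\bar s$.

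For the zeroth coefficient, (17) gives $(T_\tau^* k_s)_0 = b_0 = -1/\bar s$, which matches. For $j\ge 1$, substitute (19) into (18):
\[
(T_\tau^* k_s)_j = (j+1)^{-i\tau}\Big(-\frac{(j+1)^{1-\bar s}}{\bar s}\Big) - j^{-i\tau}\Big(-\frac{j^{1-\bar s}}{\bar s}\Big) = -\frac{1}{\bar s}\Big((j+1)^{1-\bar s-i\tau} - j^{1-\bar s - i\tau}\Big).
\]
Here we combine powers using $x^{a}x^{b} = x^{a+b}$ for positive real $x$ with the principal branch, which is valid. This matches the target coefficient, and equality of all Taylor coefficients gives equality in $H^2$.

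No step here is a real obstacle. The only point needing care is the conjugation bookkeeping: we must confirm that $\overline{s-i\tau} = \bar s + i\tau$, since this is what produces both the exponent $1-\bar s - i\tau$ and the prefactor $(\bar s + i\tau)/\bar s$. We should also note the degenerate case $\bar s + i\tau = 0$, that is $s = i\tau$. This case is excluded by $\Re s > \tfrac12$, so $k_{s-i\tau}$ is well defined by (B).
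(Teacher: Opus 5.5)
Your proof is correct and follows the paper's argument exactly: you read off the zeroth coefficient from (17), substitute the telescoped partial sums (19) into (18) for $j\geq 1$, and match the result against the coefficients of $\frac{\bar s + i\tau}{\bar s}k_{s-i\tau}$ using $\overline{s-i\tau}=\bar s + i\tau$. You also note that $k_{s-i\tau}\in H^2$ because $\Re(s-i\tau)=\Re s>\tfrac12$, and that $\bar s + i\tau\neq 0$; these are small details the paper leaves implicit.
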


\begin{proof}
The zeroth coefficient of \(T_{\tau}^{*}k_{s}\) is  \(- 1 / \bar{s}\) by (17) and (B). For \(j \geq 1\), insert (19) into (18):
\[
(T_{\tau}^{*}k_{s})_{j} = -\frac{1}{\bar{s}}\big((j + 1)^{1 - \bar{s} - i\tau} - j^{1 - \bar{s} - i\tau}\big).
\]
Since \(\overline{s - i\tau} = \bar{s} + i\tau\), multiplication of \(k_{s - i\tau}\) by \((\bar{s} + i\tau) / \bar{s}\) gives exactly these coefficients, including the zeroth one.
\end{proof}

\section{RH as an invariant subspace question}

\begin{theorem}\label{thm:6.1}
Fix \(\tau \in \mathbb{R}\setminus \{0\}\). Then the following are equivalent:

\begin{enumerate}
\item[(i)] the Riemann hypothesis holds;
\item[(ii)] \(\overline{\mathcal{N}}\) is invariant under \(T_{\tau}\);
\item[(iii)] \(T_{\tau}h_{k}\in \overline{\mathcal{N}}\) for some integer \(k\geq 2\).
\end{enumerate}
Moreover, if \(\rho\) is a zero of \(\zeta\) with \(\Re \rho >\frac{1}{2}\) and \(\zeta (\rho - i\tau)\neq 0\), then for every \(k\geq 2\)
\begin{equation}
\mathrm{dist}\big(T_{\tau}h_{k},\overline{\mathcal{N}}\big)\geq \frac{|\zeta(\rho - i\tau)||k^{1 - \rho + i\tau} - 1|}{|\rho|\|k_{\rho}\|} >0. \tag{21}
\end{equation}
\end{theorem}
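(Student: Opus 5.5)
The plan is to prove the cycle (i)$\Rightarrow$(ii)$\Rightarrow$(iii)$\Rightarrow$(i), with the distance estimate (21) serving as the engine for the last implication.

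\emph{(i)$\Rightarrow$(ii).} If RH holds, then by (A) $\mathcal{N}$ is dense. Hence $\overline{\mathcal{N}}=H^2$, which is trivially invariant under the bounded operator $T_\tau$ (Proposition \ref{thm:2.2}).

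\emph{(ii)$\Rightarrow$(iii).} This is immediate, since $h_k\in\mathcal{N}$.

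\emph{Proof of (21).} Let $\rho$ satisfy $\zeta(\rho)=0$, $\Re\rho>\frac12$, and $\zeta(\rho-i\tau)\neq 0$. By (D), $k_\rho\in\mathcal{N}^\perp=\overline{\mathcal{N}}^\perp$. For any $g\in\overline{\mathcal{N}}$, Cauchy--Schwarz gives
\[
\|T_\tau h_k-g\|\,\|k_\rho\|\geq |\langle T_\tau h_k-g,k_\rho\rangle|=|\langle h_k,T_\tau^*k_\rho\rangle|.
\]
By Proposition \ref{thm:5.1}, $T_\tau^*k_\rho=\frac{\bar\rho+i\tau}{\bar\rho}k_{\rho-i\tau}$. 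Note that $\Re(\rho-i\tau)=\Re\rho>\frac12$, so (C) applies. It yields
\[
\langle h_k,T_\tau^*k_\rho\rangle=\frac{\rho-i\tau}{\rho}\cdot\Big(-\frac{\zeta(\rho-i\tau)}{\rho-i\tau}\Big)(k^{1-\rho+i\tau}-1)=-\frac{\zeta(\rho-i\tau)}{\rho}(k^{1-\rho+i\tau}-1).
\]
This holds even if $\rho-i\tau=0$, which in any case cannot happen because $\Re\rho>\frac12$. Taking the infimum over $g$ gives (21). For strict positivity, observe that $|k^{1-\rho+i\tau}|=k^{1-\Re\rho}\neq 1$. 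Indeed, $\Re\rho<1$ because $\zeta$ has no zeros on $\Re s\geq 1$, and $k\geq2$.

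\emph{(iii)$\Rightarrow$(i).} Argue by contraposition. Suppose RH fails, so some zero $\rho_0$ has $\Re\rho_0>\frac12$. The key claim is that some zero $\rho$ with $\Re\rho>\frac12$ satisfies $\zeta(\rho-i\tau)\neq0$. Once this holds, (21) shows $T_\tau h_k\notin\overline{\mathcal{N}}$ for every $k\geq2$, which negates (iii).

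\emph{Main obstacle: the key claim.} Suppose instead that $\zeta(\rho-i\tau)=0$ for every zero $\rho$ with $\Re\rho>\frac12$. Then by induction $\rho_0-in\tau$ is a zero for all $n\geq0$. All these points have real part $\sigma_0=\Re\rho_0>\frac12$ and are spaced $|\tau|>0$ apart vertically. So the number of zeros with real part $\geq\sigma_0$ and imaginary part in $[-T,T]$ is $\gg T/|\tau|$. This contradicts the classical Bohr--Landau density bound $N(\sigma,T)=o(T)$ for fixed $\sigma>\frac12$; any standard zero-density estimate such as $N(\sigma,T)=O(T^{1-\delta})$ would also do, and conjugate symmetry handles the sign of $\tau$. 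I expect this appeal to a zero-density theorem to be the only nonelementary input. The rest is the routine computation with Propositions \ref{thm:5.1} and (C) above.
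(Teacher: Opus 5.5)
Your proof is correct and follows essentially the paper's route: the same identity $\langle T_\tau h_k,k_\rho\rangle=-\frac{\zeta(\rho-i\tau)}{\rho}\big(k^{1-\rho+i\tau}-1\big)$ from Proposition~\ref{thm:5.1} and (C), iteration along $\rho-in\tau$, and a zero-density contradiction. The paper argues directly from (iii) rather than through (21), and one caution: Bohr--Landau is usually stated only as $N(\sigma,T)=O(T)$, which cannot rule out $\asymp T/|\tau|$ zeros, so you need a genuine $o(T)$ bound such as Carlson's or Ingham's (24), as your fallback remark already indicates.
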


\begin{proof}
If RH holds, then (A) gives \(\overline{\mathcal{N}} = H^{2}\), so (i) \(\Rightarrow\) (ii). Since \(h_{k}\in \mathcal{N}\) for every \(k\geq 2\), (ii) \(\Rightarrow\) (iii) is immediate. Assume (iii), and suppose that RH is false. By the symmetry of the nontrivial zeros, there is a zero
\[
\rho = \beta +i\gamma ,\qquad \frac{1}{2} < \beta < 1,
\]
with \(\gamma >0\). Then \(k_{\rho}\in \mathcal{N}^{\perp}\) by (D). Since \(T_{\tau}h_{k}\in \overline{\mathcal{N}}\),
\[
0 = \langle T_{\tau}h_{k},k_{\rho}\rangle = \langle h_{k},T_{\tau}^{*}k_{\rho}\rangle .
\]
Using Theorem \ref{thm:5.1} and (C),
\begin{equation}
\langle T_{\tau}h_{k},k_{\rho}\rangle = -\frac{\zeta(\rho - i\tau)}{\rho}\big(k^{1 - \rho + i\tau} - 1\big). \tag{22}
\end{equation}
The second factor is nonzero, because
\[
|k^{1 - \rho +i\tau}| = k^{1 - \beta} > 1.
\]
Hence \(\zeta (\rho - i\tau) = 0\). Repeating the same argument with \(\rho - i\tau\), then \(\rho - 2i\tau\), and so on, gives
\begin{equation}
\zeta (\rho -i n\tau) = 0,\qquad n = 0,1,2,\ldots \tag{23}
\end{equation}
We now invoke Ingham's zero-density estimate. For \(\frac{1}{2} < \sigma < 1\), let
\[
N(\sigma ,T) = \# \{\rho ' = \beta ' + i\gamma ':\zeta (\rho ') = 0,\beta ' \geq \sigma ,0< \gamma ' \leq T\} ,
\]
counting multiplicity. Ingham proved \cite{Ingham40} that
\begin{equation}
N(\sigma ,T)\ll T^{\frac{3(1 - \sigma)}{2 - \sigma}}(\log T)^{5}. \tag{24}
\end{equation}
For every fixed \(\sigma > \frac{1}{2}\), the exponent $\frac{3(1 - \sigma)}{2 - \sigma}$ is strictly less than 1, and therefore
\begin{equation}
N(\sigma ,T) = o(T) \tag{25}
\end{equation}
since $(\log T)^k=o(T^\epsilon)$ for any fixed $k,\epsilon>0$. Choose \(\sigma\) with \(\frac{1}{2} < \sigma < \beta\). If \(\tau < 0\), the zeros in (23) themselves have positive ordinates growing in steps of \(|\tau |\). If \(\tau > 0\), their complex conjugates are zeros and, for all sufficiently large \(n\), have positive ordinates growing in steps of \(\tau\). In either case,
\[
N(\sigma ,T)\geq \frac{T}{|\tau|} +O(1),
\]
contradicting (25). Thus RH holds. It remains to prove the quantitative estimate (21). We use the elementary Hilbert-space bound
\[
g\in \mathcal{N}^{\perp}\quad \Longrightarrow \quad \mathrm{dist}(x,\overline{\mathcal{N}})\geq \frac{|\langle x,g\rangle|}{\|g\|}.
\]
Let \(\rho\) be as in the statement of the theorem. Then \(k_{\rho}\in \mathcal{N}^{\perp}\), so applying the bound with \(x = T_{\tau}h_{k}\) and \(g = k_{\rho}\), and then using (22), yields (21).
\end{proof}

\section{On AI assistance}
In \cite[Prop. 22]{CMSN24}, the common eigenvectors of the adjoints of the $H^2$-operators
\[
W_n f(z) = (1 + z + \cdots + z^{n-1}) f(z^n)
\]
for $n\geq 1$ were characterized. These operators $W_n$ were introduced in \cite{Noor19} and further studied in \cite{ManzurNoorSantos}. In particular 
\[
W_n^* k_s = n^{1-\bar{s}} k_s \quad \forall \, n \geq 1.
\]
So any operator $T$ such that $T^*$ maps $k_s$ ($\Re s>1/2$) to a scalar multiple of $k_{s-i\tau}$ ($\tau\in\mathbb{R}$) must necessarily satisfy the intertwining operator relation
\[
W_n^* T^* = n^{i\tau} T^* W_n^* \ \ \implies \ \ W_n T = n^{i\tau} T W_n.
\]
Then GPT-5.6 Sol was prompted to solve this system along the canonical basis $(z^k)_{k\geq0}$ which resulted in the infinite upper-triangular matrix

\[
T
=
\begin{pmatrix}
1
& 2^{i\tau}-1
& 3^{i\tau}-2^{i\tau}
& 4^{i\tau}-3^{i\tau}
& \cdots
\\[3pt]
0
& 2^{i\tau}
& 3^{i\tau}-2^{i\tau}
& 4^{i\tau}-3^{i\tau}
& \cdots
\\[3pt]
0
& 0
& 3^{i\tau}
& 4^{i\tau}-3^{i\tau}
& \cdots
\\[3pt]
0
& 0
& 0
& 4^{i\tau}
& \cdots
\\[3pt]
\vdots
& \vdots
& \vdots
& \vdots
& \ddots
\end{pmatrix}.
\]

After verifying that this matrix indeed resulted in a bounded operator on $\ell^2$, GPT-5.6 Sol was asked to provide a possible explicit (non-matricial) form for $T$. This resulted in
several possible expressions, but $(7)$ was deemed to be the best for verifying all the required properties directly.

\bigskip

\noindent
\textsc{IMECC, Universidade Estadual de Campinas,
Campinas-SP, Brazil.}

\noindent
\textit{E-mail address:} \texttt{waleed@unicamp.br}
\quad (Waleed Noor, Corresponding Author)

\medskip

\noindent
\textsc{Departamento de Matemáticas y Estadística,
Universidad del Norte, Barranquilla, Colombia.}

\noindent
\textit{E-mail address:} \texttt{juan\_manzur123@hotmail.com}
\quad (Juan Manzur)

\end{document}